\newtheorem{thm}{Theorem}[section]
\newtheorem{cor}[thm] {Corollary}
\newtheorem{lem} [thm]{Lemma}
\theoremstyle{definition} 
\newtheorem{rmk}[thm] {Remark}
\newtheorem{defn}[thm]{Definition}
\raggedbottom \pagestyle{myheadings} \hbadness = 10000 \tolerance = 10000
\numberwithin{equation}{section}
\begin{document}
	\label{'ubf'}
	\setcounter{page}{1} 
	
	\markboth {\hspace*{-9mm} \centerline{\footnotesize \sc
			On the Powers of Signed Graphs}
	}
	{ \centerline {\footnotesize \sc
			Shijin T V, Germina K A, Shahul Hameed K} \hspace*{-9mm}
	}
	\begin{center}
		{
			{\huge \textbf{On the Powers of Signed Graphs
				}
			}
			
			\bigskip
			Shijin T V \footnote{\small Department of Mathematics, Central University of Kerala, Kasaragod - 671316,\ Kerala,\ India.\ Email: shijintv11@gmail.com}
			Germina K A \footnote{\small  Department of Mathematics, Central University of Kerala, Kasaragod - 671316,\ Kerala,\ India.\ Email: srgerminaka@gmail.com}
			Shahul Hameed K \footnote{\small  Department of
				Mathematics, K M M Government\ Women's\ College, Kannur - 670004,\ Kerala,  \ India.  E-mail: shabrennen@gmail.com}
			
		}
\end{center}
\newcommand\spec{\operatorname{Spec}}
\thispagestyle{empty}
\begin{abstract}
A signed graph is an ordered pair $\Sigma=(G,\sigma),$ where $G=(V,E)$ is the underlying graph of $\Sigma$ with a signature function $\sigma:E\rightarrow \{1,-1\}$. 
In this article, we define $n^{th}$ power of a signed graph and discuss some properties of these powers of signed graphs. As we can define two types of signed graphs as the power of a signed graph, necessary and sufficient conditions are given for an $n^{th}$ power of a signed graph to be unique. Also, we characterize balanced power signed graphs.
\end{abstract} 

\textbf{Key Words:} Signed graph, Signed distance, Distance compatibility, Power of signed graphs.

\textbf{Mathematics Subject Classification (2010):}   05C12, 05C22, 05C50.
\section{Introduction}
In this paper, we will treat only simple, finite and connected signed graphs. A signature on a graph $G=(V,E)$ is a function $\sigma:E\rightarrow \{1,-1\}$. A signed graph is a graph $G=(V,E)$ with a signature $\sigma$ and is denoted as $\Sigma=(G,\sigma)$, where $G$ is called the underlying graph of $\Sigma$. The sign of a cycle in a signed graph is the product of the signs of its edges. A cycle in a signed graph is said to be balanced if it has positive sign. A signed graph $\Sigma$ is said to be balanced if all of its cycles are positive and $\Sigma$ is unbalanced, otherwise \cite{hrry}.

To begin with, we recall  and adopt some definitions and notations from ~\cite{sdist}, in which the  concept of signed distance in signed graphs and distance compatible signed graphs are introduced.
 
Let $u$ and $v$ be any two vertices in a connected graph $G$. As usual, $d(u,v)$ denotes the distance (the lenght of the shortest path) between $u$ and $v$. 
Let $\mathcal{P}_{(u,v)}$ denote the collection of all shortest paths $P_{(u,v)}$ between them. Then, the distance between $u$ and $v$ in a signed graph is defined as:

$d_{\max}(u,v) = \sigma_{\max}(u,v) d(u,v)=\max\{\sigma(P_{(u,v)}): P_{(u,v)} \in \mathcal{P}_{(u,v)} \}d(u,v)$ and 

$d_{\min}(u,v) = \sigma_{\min}(u,v) d(u,v)=\min\{\sigma(P_{(u,v)}): P_{(u,v)} \in \mathcal{P}_{(u,v)} \}d(u,v),$ where the sign of a path $P$ in $\Sigma$ is defined as $\sigma(P)=\prod_{e\in E(P)} \sigma(e).$\\
Two vertices $u$ and $v$ in  $\Sigma$ are said to be \emph{distance-compatible} (briefly, \emph{compatible}) if $d_{\min}{(u,v)}=d_{\max}{(u,v)}.$ A signed graph $\Sigma$ is said to be (distance)-compatible or simply compatible, if every pair of vertices is compatible and $\Sigma$ is incompatible, otherwise.

Corresponding to the functions $d_{\max}$ and $d_{\min}$, there are two types of distance matrices in a signed graph called signed distance matrices \cite{sdist} as given below.

	\par(D1)  $D^{\max}(\Sigma)=(d_{\max}(u,v))_{n\times n}$.
	\par(D2)  $D^{\min}(\Sigma)=(d_{\min}(u,v))_{n\times n}$.
	
Also, the concept of associated signed complete graphs associated with $D^{\max}(\Sigma)$ and $D^{\min}(\Sigma)$ are introduced in \cite{sdist}, as follows. 

\begin{defn}
	The associated signed complete graph $K^{D^{\max}}(\Sigma)$ with respect to $D^{\max}(\Sigma)$ is obtained by  joining the non-adjacent vertices of $\Sigma$ with edges having signs 
	\begin{equation*}
	\sigma(uv)= \sigma_{\max}(uv)
	\end{equation*}
	
	The associated signed complete graph $K^{D^{\min}}(\Sigma)$ with respect to $D^{\min}(\Sigma)$ is obtained by joining the non-adjacent vertices of $\Sigma$ with edges having signs 
	\begin{equation*}
	\sigma(uv)= \sigma_{\min}(uv)
	\end{equation*}
\end{defn}
Whenever, $D^{\max}=D^{\min}=D^\pm$, say, the associated signed complete graph of $\Sigma$ is denoted by $K^{D^\pm}(\Sigma).$

The concept of $n^{th}$ power of graph is discussed in \cite{spow}. The $n^{th}$ power of a graph $G=(V,E)$ is denoted as $G^n$ and is defined as, graph having the same vertex set as that of $G$ and  any two vertices $u$ and $v$ are adjacent in $G^n$ if their distance $d(u,v)$ is less than or equal to $n$.

 In this paper, we define $n^{th}$ power of a signed graph by using the concept of signed distance in signed graphs and discuss some properties of $n^{th}$ power of signed graphs. Also, we characterize the balanced power signed graphs.

\section{Main Results}
Corresponding to $\sigma_{\max}$ and $\sigma_{\min}$, two types of  $n^{th}$ powers of signed graph for a given signed graph $\Sigma$, can be defined as follows.
\begin{defn}
	Let $\Sigma=(G,\sigma)$ be a signed graph.
	\par(D1)  The $n^{th}$ power signed graph $\Sigma_{\max}^{n}$ is a signed graph $\Sigma_{\max}^{n}=(G^n,\sigma'),$ where $G^n$ is the underlying graph of $\Sigma_{\max}^{n}$ and for any edge $e=uv\in G^n,\ \sigma'(uv)=\sigma_{\max}(u,v).$   
	\par(D2)  The $n^{th}$ power signed graph $\Sigma_{\min}^{n}$ is a signed graph $\Sigma_{\min}^{n}=(G^n,\sigma''),$ where $G^n$ is the underlying graph of $\Sigma_{\min}^{n}$ and for any edge $e=uv\in G^n,\ \sigma''(uv)=\sigma_{\min}(u,v).$   
	
\end{defn}

\begin{rmk}
	\rm{The $n^{th}$ power of a signed graph $\Sigma$ is said to be unique whenever,  $\Sigma_{\max}^{n}=\Sigma_{\min}^{n},$ and is denoted by $\Sigma^{n}$.}	
\end{rmk}

The following Theorem \ref{T1} gives a characterization for the uniqueness of $n^{th}$ power of a signed graph.

\begin{thm}\label{T1}
	Let $\Sigma=(G,\sigma)$ be a signed graph. Then, the $n^{th}$ power of $\Sigma$ is unique if and only if there exists no incompatible pair of vertices at a distance less than or equal to $n$.
\end{thm}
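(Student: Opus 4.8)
The plan is to unwind both sides of the stated equivalence until they become the same assertion about signatures. First I would record that, by Definition, the two candidate powers $\Sigma_{\max}^{n}=(G^n,\sigma')$ and $\Sigma_{\min}^{n}=(G^n,\sigma'')$ share the \emph{same} underlying graph $G^n$; consequently the equality $\Sigma_{\max}^{n}=\Sigma_{\min}^{n}$ is equivalent to the equality of the two signature functions, i.e. to $\sigma'(uv)=\sigma''(uv)$ for every edge $uv$ of $G^n$. By the construction of the $n^{th}$ power graph, the edges of $G^n$ are exactly the pairs $\{u,v\}$ with $1\le d(u,v)\le n$, and on such a pair $\sigma'(uv)=\sigma_{\max}(u,v)$ while $\sigma''(uv)=\sigma_{\min}(u,v)$. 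Hence uniqueness of the $n^{th}$ power is equivalent to the statement that $\sigma_{\max}(u,v)=\sigma_{\min}(u,v)$ for all vertices $u,v$ with $d(u,v)\le n$.

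Next I would translate this into the language of compatibility. Since $\Sigma$ is connected, for distinct $u,v$ we have $d(u,v)\ge 1>0$, so cancelling the positive factor $d(u,v)$ is legitimate: $\sigma_{\max}(u,v)=\sigma_{\min}(u,v)$ holds if and only if $\sigma_{\max}(u,v)\,d(u,v)=\sigma_{\min}(u,v)\,d(u,v)$, that is, $d_{\max}(u,v)=d_{\min}(u,v)$, which is precisely the definition of $u$ and $v$ being distance-compatible. Combining this with the previous paragraph, uniqueness of $\Sigma^{n}$ is equivalent to: every pair $u,v$ with $d(u,v)\le n$ is compatible — which is exactly the negation of ``there exists an incompatible pair of vertices at distance at most $n$.''

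To present the two implications cleanly I would argue by contrapositive for the forward direction: if some incompatible pair $u,v$ satisfies $d(u,v)\le n$, then $\{u,v\}\in E(G^n)$ and $\sigma'(uv)=\sigma_{\max}(u,v)\ne\sigma_{\min}(u,v)=\sigma''(uv)$, so $\Sigma_{\max}^{n}\ne\Sigma_{\min}^{n}$; conversely, if no such pair exists, then $\sigma'$ and $\sigma''$ agree on every edge of $G^n$, whence $\Sigma_{\max}^{n}=\Sigma_{\min}^{n}$. One small bookkeeping remark I would include is that pairs at distance exactly $1$ have a unique shortest path (the edge itself), so they are automatically compatible and never obstruct uniqueness; effectively only the pairs at distance between $2$ and $n$ are relevant. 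I do not expect a genuine obstacle here: the result is essentially a direct unpacking of definitions, and the only points requiring a little care are the observation that equality of signed graphs over a common underlying graph reduces to equality of signatures, and the use of $d(u,v)>0$ to pass between $\sigma_{\max}=\sigma_{\min}$ and $d_{\max}=d_{\min}$.
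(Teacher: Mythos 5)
Your proposal is correct and follows essentially the same route as the paper: both arguments reduce uniqueness of $\Sigma^n$ to the statement that $\sigma_{\max}(u,v)=\sigma_{\min}(u,v)$ for all pairs at distance at most $n$, and then identify this with the absence of incompatible pairs at such distances. The only differences are presentational (you argue the forward direction by contrapositive at the level of signature functions, while the paper argues by contradiction via an explicit positive path $P$ and negative path $Q$), and your remark about cancelling the positive factor $d(u,v)$ makes the passage between $\sigma_{\max}=\sigma_{\min}$ and $d_{\max}=d_{\min}$ slightly more explicit than the paper does.
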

\begin{proof}
	Suppose that the $n^{th}$ power of $\Sigma$ is unique. If possible, assume on the contrary that there exists an incompatible pair $u$ and $v$ in $\Sigma$ at a distance, say, $k\leq n$. Then, there exists two $uv$ paths $P$ and $Q$ of distance $k$ in $\Sigma$ with $\sigma(P)$ is positive and $\sigma(Q)$ is negative. While considering $\Sigma^n,$ all the vertices with distance less than or equal to $n$ will have to be joined by  an edge with a unique signature. But here, the path $P$ will induce a positive edge $uv$ in $\Sigma_{\max}^{n}$ and the path $Q$ will induce a negative edge $uv$ in $\Sigma_{\min}^{n},$ implies $\Sigma_{\max}^{n}\neq \Sigma_{\min}^{n},$ a contradiction.\\
	Hence, there exists no incompatible pair of vertices in $\Sigma$ at a distance less than or equal to $n$.
	
	Conversely, suppose that there exists no incompatible pair of vertices in $\Sigma$ at a distance less than or equal to $n.$ Then, for any two vertices $u$ and $v$ with distance less than or equal to $n$ in $\Sigma,$ $\sigma_{\max}(u,v)=\sigma_{\min}(u,v)=\sigma(u,v).$ That is, $\Sigma_{\max}^{n}=\Sigma_{\min}^{n}=\Sigma^{n}.$ Hence, the $n^{th}$ power of $\Sigma$ is unique.
	\end{proof}

\begin{thm}
	Let $\Sigma=(G,\sigma)$ be a signed graph with diameter less than or equal to $n$. Then, the $n^{th}$ power signed graph $\Sigma_{\max}^{n}=(G^n,\sigma')(\text{or}\ \Sigma_{\min}^{n}=(G^n,\sigma''))$ is the associated signed complete graph $K^{D^{\max}}(\Sigma)(\text{or}\ K^{D^{\min}}(\Sigma))$. Moreover, if $\Sigma$ is compatible then, $\Sigma^n=(G^n,\sigma')$ is the associated signed complete graph $K^{D^{\pm}}(\Sigma).$  
\end{thm}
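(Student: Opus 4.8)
The plan is first to identify the underlying graph of both objects and then to compare their signatures edge by edge. Since the diameter of $G$ is at most $n$, every pair of vertices of $G$ is at distance at most $n$, so by the definition of the $n^{th}$ power of a graph the underlying graph $G^n$ is the complete graph on $V(G)$; this is precisely the underlying graph of $K^{D^{\max}}(\Sigma)$ (and of $K^{D^{\min}}(\Sigma)$) as well. Hence it suffices to show that the signatures agree on every edge of $K_{|V(G)|}$.

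For the signature comparison I would split $E(G^n)$ into the non-edges and the edges of $G$. If $uv\notin E(G)$, then by the definition of $\Sigma_{\max}^{n}$ the edge $uv$ receives the sign $\sigma_{\max}(u,v)$, and by the definition of $K^{D^{\max}}(\Sigma)$ the added edge $uv$ also receives the sign $\sigma_{\max}(u,v)$, so the two agree. If $uv\in E(G)$, then since $G$ is simple the only shortest $uv$-path is the edge $uv$ itself, whence $\sigma_{\max}(u,v)=\sigma_{\min}(u,v)=\sigma(uv)$; thus $\Sigma_{\max}^{n}$ carries the original sign $\sigma(uv)$ on this edge, and $K^{D^{\max}}(\Sigma)$ does too, since that construction adds edges only between non-adjacent vertices and leaves the edges of $\Sigma$ untouched. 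Therefore $\Sigma_{\max}^{n}=K^{D^{\max}}(\Sigma)$, and the identical argument with $\sigma_{\min}$ in place of $\sigma_{\max}$ gives $\Sigma_{\min}^{n}=K^{D^{\min}}(\Sigma)$.

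For the ``moreover'' part, suppose $\Sigma$ is compatible. Then $d_{\max}(u,v)=d_{\min}(u,v)$, equivalently $\sigma_{\max}(u,v)=\sigma_{\min}(u,v)$, for every pair $u,v$, so $D^{\max}(\Sigma)=D^{\min}(\Sigma)=D^{\pm}(\Sigma)$ and the associated complete graph is the single signed graph $K^{D^{\pm}}(\Sigma)$. Moreover $\Sigma$ has no incompatible pair of vertices at all, in particular none at distance at most $n$, so Theorem~\ref{T1} yields that the $n^{th}$ power of $\Sigma$ is unique, $\Sigma_{\max}^{n}=\Sigma_{\min}^{n}=\Sigma^{n}$. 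Combining this with the first part gives $\Sigma^{n}=K^{D^{\pm}}(\Sigma)$.

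I do not expect a serious obstacle here; the only points needing a moment of care are the observation that ``diameter at most $n$'' forces $G^n$ to be complete, and the check that both constructions leave the sign $\sigma(uv)$ unchanged on an original edge $uv\in E(G)$ (because such an edge is its own unique shortest path), which is what makes the two signatures coincide on all of $E(G^n)$.
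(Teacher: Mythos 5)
Your proof is correct and takes essentially the same route as the paper's: a direct verification from the definitions that diameter at most $n$ makes $G^n$ complete and that the signature assigned by $\Sigma_{\max}^{n}$ (resp.\ $\Sigma_{\min}^{n}$) coincides with that of $K^{D^{\max}}(\Sigma)$ (resp.\ $K^{D^{\min}}(\Sigma)$), with compatibility giving the uniqueness needed for the ``moreover'' part. Your explicit check that the two constructions agree on the original edges of $G$ (an edge being its own unique shortest path) is a detail the paper glosses over, but it is the same argument.
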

\begin{proof}
	Let $\Sigma=(G,\sigma)$ be a signed graph with diameter less than or equal to $n.$ That is, the maximum distance between any two vertices in $\Sigma$ is $n.$ Therefore, while taking $\Sigma_{\max}^{n}$ all the non-adjacent vertices $u$ and $v$ will form an edge $uv$ with sign $\sigma'(uv)=\sigma_{\max}(u,v).$ Hence, $\Sigma_{\max}^{n}$ is the associated signed complete graph $K^{D^{\max}}(\Sigma).$ Similarly, if we consider  $\Sigma_{\min}^{n}$ all the non-adjacent vertices $u$ and $v$ will form an edge $uv$ with sign $\sigma''(uv)=\sigma_{\min}(u,v).$ Hence, $\Sigma_{\min}^{n}$ is the associated signed complete graph $K^{D^{\min}}(\Sigma).$
	
	If $\Sigma$ is compatible, then $\Sigma_{\max}^{n}=\Sigma_{\min}^{n}=\Sigma^{n}.$ Since, the diameter is less than or equal to $n,$ all the non-adjacent vertices in $\Sigma$ will form an edge in $\Sigma^{n}$ and sign of these edges are $\sigma'(uv)=\sigma_{\max}(u,v)=\sigma_{\min}(u,v).$ Hence, $\Sigma^n$ is the associated signed complete graph $K^{D^{\pm}}(\Sigma).$
\end{proof}

\begin{lem}\label{L1}
	Let $\Sigma=(G,\sigma)$ be a signed graph and $u,v$ be two vertices in $\Sigma$. Then, for any $u$--$v$ path $P$ of length $k$ there exists a $u$--$v$ path $P'$ of length $\lceil\frac{k}{n}\rceil$ in $\Sigma^n=(G^n,\sigma'),$ where $\sigma(P)=\sigma'(P').$
\end{lem}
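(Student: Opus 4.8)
The plan is to build $P'$ by traversing $P$ and ``jumping'' at most $n$ consecutive edges at a time. Write $P\colon u=w_0,w_1,\dots,w_k=v$ and set $m=\lceil k/n\rceil$. Cut $P$ into the $m$ consecutive subpaths $P_1,\dots,P_m$, where $P_i$ joins $w_{(i-1)n}$ to $w_{in}$ for $i<m$ and $P_m$ joins $w_{(m-1)n}$ to $w_k$; each $P_i$ then has length at most $n$, and together they partition the edge set of $P$. Let $x_0=u,\,x_1=w_n,\dots,x_{m-1}=w_{(m-1)n},\,x_m=v$ be the cut points, listed in order, and let $P'$ be the sequence $x_0,x_1,\dots,x_m$.

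First I would check that $P'$ is a genuine $u$--$v$ path in $G^n$ of the right length. The $x_i$ are among the pairwise distinct vertices $w_0,\dots,w_k$ of the path $P$, hence pairwise distinct; and since the subpath $P_i$ of $G$ from $x_{i-1}$ to $x_i$ has length at most $n$ we get $d(x_{i-1},x_i)\le n$, so $x_{i-1}x_i\in E(G^n)$. A short index count (cases $n\mid k$ and $n\nmid k$ handled separately) confirms that $P'$ has length exactly $m=\lceil k/n\rceil$.

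The substantive step is matching signs, $\sigma'(P')=\sigma(P)$. Since the $P_i$ partition $E(P)$ we have $\sigma(P)=\prod_{i=1}^m\sigma(P_i)$, while $\sigma'(P')=\prod_{i=1}^m\sigma'(x_{i-1}x_i)$ by the definition of the sign of a path; so it suffices to prove $\sigma'(x_{i-1}x_i)=\sigma(P_i)$ for each $i$. Here I would use two facts. Because $\Sigma^n=(G^n,\sigma')$ is well defined, i.e. the $n^{th}$ power is unique, Theorem \ref{T1} gives that there is no incompatible pair at distance $\le n$; as $d(x_{i-1},x_i)\le n$, the pair $x_{i-1},x_i$ is compatible, so $\sigma_{\max}(x_{i-1},x_i)=\sigma_{\min}(x_{i-1},x_i)$, and this common value is the sign $\sigma(Q)$ of \emph{every} shortest $x_{i-1}$--$x_i$ path $Q$. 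Secondly, the stretch $P_i$ is itself a shortest $x_{i-1}$--$x_i$ path — automatic when $P$ is taken to be a shortest path, since every subpath of a geodesic is a geodesic — so $\sigma(P_i)$ equals that common value $\sigma_{\max}(x_{i-1},x_i)=\sigma'(x_{i-1}x_i)$. Multiplying over $i$ yields $\sigma'(P')=\sigma(P)$.

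I expect this last step to be the only real obstacle: one must know that the $\Sigma^n$-edge $x_{i-1}x_i$ carries exactly the sign of the portion of $P$ it replaces, and this rests on (a) uniqueness of $\Sigma^n$, which forces the relevant short-range pairs to be compatible, and (b) the replaced portion $P_i$ realizing the distance between its endpoints. Everything else — distinctness of the cut vertices, adjacency in $G^n$, the bookkeeping $m=\lceil k/n\rceil$, and the telescoping of the signs over the subpaths — is routine.
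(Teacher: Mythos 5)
Your construction is the same as the paper's: cut $P$ into $\lceil k/n\rceil$ consecutive blocks of length at most $n$, let each block become one edge of $G^n$, and multiply the signs of the blocks. The difference lies entirely in the one step you flag as substantive. The paper simply asserts that the $\Sigma^n$-edge $x_{i-1}x_i$ carries the sign $\sigma(P_i)$ of the block it replaces; you supply the two hypotheses that actually make this true, namely compatibility of the pair $x_{i-1},x_i$ (obtained from uniqueness of $\Sigma^n$ via Theorem \ref{T1}) and the fact that $P_i$ realizes the distance between its endpoints. Your caution is not pedantry: without the geodesic hypothesis the lemma as literally stated (``for \emph{any} $u$--$v$ path $P$'') is false. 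Take $n=2$ and $\Sigma$ a triangle on $a,b,c$ with $\sigma(ab)=\sigma(bc)=+1$ and $\sigma(ac)=-1$; here $\Sigma^2$ is unique and equals $\Sigma$. The path $P=a,b,c$ has length $2$ and sign $+1$, but the only $a$--$c$ path of length $\lceil 2/2\rceil=1$ in $\Sigma^2$ is the edge $ac$, whose sign is $\sigma_{\max}(a,c)=-1$. So the paper's proof has a genuine gap at exactly the point you isolate, and your restricted version --- $P$ a shortest path, so that every block is a geodesic between its endpoints --- is the statement that is actually true and the only one the paper ever uses (both the subsequent compatibility theorem and Lemma \ref{L3} apply the lemma to shortest paths).

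Two smaller remarks on your write-up. First, your appeal to Theorem \ref{T1} presupposes that the power is unique; this is implicit in the paper's notation $\Sigma^n$ but should be recorded as a hypothesis, since Lemma \ref{L3} also invokes this lemma for $\Sigma_{\max}^n$ alone, where the edge sign is $\sigma_{\max}(x_{i-1},x_i)$ and need not equal the sign of a geodesic block if the pair is incompatible. Second, the claim that $P'$ has length exactly $\lceil k/n\rceil$ in the sense of being a shortest $u$--$v$ path in $G^n$ is not part of the lemma and you rightly do not claim it; you only need $P'$ to be \emph{a} path of that length, which your distinctness argument gives.
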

\begin{proof}
	Let $\Sigma=(G,\sigma)$ be a signed graph and let $P$ be a $u$--$v$ path of length $k$ in $\Sigma.$ In $\Sigma^n$ all the vertices with distance at most $n$ in $\Sigma$ will form an edge. By considering division algorithm on $k$ and $n,$  we get $k=nq+r,\text{where}\ 0\leq r< n.$\\
	\textbf{Case 1:} $r=0.$ \\
	Then, $k=nq.$ Hence, there will be $q$ edges between $u$ and $v$ in $\Sigma^n.$\\
	\textbf{Case 2:} $r\neq 0.$\\ 
	Then, $k=nq+r.$ Since, $r<n$ the path of length $r$ will form an edge in $\Sigma^n.$ Then, there will be $q+1$ edges between $u$ and $v$ in $\Sigma^n.$\\
	From the above cases, it can be concluded that, corresponding to the path $P$ of length $k$ in $\Sigma$ there is a path $P'$ from $u$ to $v$ in $\Sigma^n$ of length $\lceil\frac{k}{n}\rceil.$\\
	To prove $\sigma(P)=\sigma'(P').$ By the definition of $\Sigma^n$ any path $P$ of length $k\leq n$ will form an edge $e$ in $\Sigma^n$ and here $\sigma(P)=\sigma'(e).$ \\
    Let $k> n$ and $k=nq+r$ and let $\{e_1, e_2, \dots e_{nq+r}\}$ be the edge set of $P.$ Then, $\sigma(P)= \prod_{i=1}^{nq+r}\sigma(e_i).$\\
    Suppose that $r\neq 0.$ Then, $P$ can be written as the union of edge disjoint paths, $P_i'=e_{(i-1)n+1}, e_{(i-1)n+2}, \dots, e_{in},\ 1\leq i\leq q$ and $P_{q+1}'=e_{nq+1}, e_{nq+2}, \dots, e_{nq+r}$ whose length is at most $n.$  Since, every path of length at most $n$ will form an edge in $\Sigma^n,$  $\{e_1',e_2', \dots, e_{q+1}'\}$ be the edges in $\Sigma^n$ corresponding to paths $P_1', \dots, P_{q+1}'.$ Let $P'$ be the path from $u$ to $v$ with edges $e_1',e_2', \dots, e_{q+1}'$ in $\Sigma^n.$ \\
    Then,
	$\sigma'(P')=\prod_{i=1}^{q+1}\sigma'(e_i')=\prod_{i=1}^{q+1}\sigma(P_i')=\sigma(P).$ When $r=0,$ in a similar way we can see that $\sigma(P)=\sigma'(P').$ 
\end{proof}	

\begin{lem}\label{Le}
	Let $\Sigma=(G,\sigma)$ be a signed graph and $u,v$ be two vertices in $\Sigma.$ Then, for any $u$--$v$path $P$ of length $k$ in $\Sigma^n=(G^n,\sigma'),$ there exists a $u$--$v$ path $P'$ of length $k',$ where $(k-1)n+1 \leq k' \leq kn$ in $\Sigma$ such that $\sigma'(P)=\sigma(P').$
\end{lem}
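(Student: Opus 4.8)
The idea is to run the argument of Lemma~\ref{L1} in reverse: replace each edge of $P$ by a shortest path of $\Sigma$ carrying the same sign, and then splice the pieces together. Write $P=w_{0}w_{1}\cdots w_{k}$ with $w_{0}=u$ and $w_{k}=v$, and let $f_{i}=w_{i-1}w_{i}$ be its $i$-th edge. Since $P$ lives in $G^{n}$ we have $d_{i}:=d_{G}(w_{i-1},w_{i})\le n$ for each $i$. Because the $n$-th power is being written $\Sigma^{n}$, it is unique, so Theorem~\ref{T1} tells us that $\Sigma$ has no incompatible pair at distance at most $n$; in particular $\sigma_{\max}(w_{i-1},w_{i})=\sigma_{\min}(w_{i-1},w_{i})=\sigma'(f_{i})$, and hence \emph{every} shortest $w_{i-1}$--$w_{i}$ path $Q_{i}$ in $\Sigma$ satisfies $\sigma(Q_{i})=\sigma'(f_{i})$ and $1\le|Q_{i}|=d_{i}\le n$.

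Concatenating $Q_{1},\dots,Q_{k}$ produces a $u$--$v$ walk $W$ in $\Sigma$ with
\[
\sigma(W)=\prod_{i=1}^{k}\sigma(Q_{i})=\prod_{i=1}^{k}\sigma'(f_{i})=\sigma'(P),\qquad |W|=\sum_{i=1}^{k}d_{i}\le kn.
\]
For the lower bound I would use the elementary identity $d_{G^{n}}(x,y)=\lceil d_{G}(x,y)/n\rceil$, whose ``$\le$'' half is Lemma~\ref{L1}: a $u$--$v$ path of length at most $(k-1)n$ in $\Sigma$ would yield a $u$--$v$ path of length at most $k-1$ in $\Sigma^{n}$, so once $k$ is the length of a shortest $u$--$v$ path in $\Sigma^{n}$ (the situation in which the lemma is meant to be applied) we get $d_{G}(u,v)\ge(k-1)n+1$, whence \emph{every} $u$--$v$ walk in $\Sigma$ --- and so every walk obtained from $W$ by shortening --- has length at least $(k-1)n+1$.

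It then remains to upgrade $W$ to a genuine path $P'$ without disturbing its sign or its length range. Excising the closed sub-walk between two occurrences of a repeated vertex only shortens $W$, so the bounds $(k-1)n+1\le|P'|\le kn$ are kept automatically; the delicate point --- which I expect to be the main obstacle --- is that such a closed sub-walk need not be balanced, so a single excision can flip the sign. The plan is to take $P'$ to be a \emph{shortest} $u$--$v$ walk in $\Sigma$ with $|P'|\le kn$ and $\sigma(P')=\sigma'(P)$ ($W$ being one candidate, such a minimal walk exists) and to show that minimality forces it to be a path: a repeated vertex would expose a closed sub-walk whose removal, if it were balanced, would contradict minimality, so the argument comes down to ruling out short \emph{unbalanced} closed walks hung on $P'$ --- and that is exactly the place where the hypothesis that $\Sigma^{n}$ is well defined (equivalently, compatibility within distance $n$) must be invoked.
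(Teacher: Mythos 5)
Your construction is exactly the one in the paper: the published proof replaces each edge $e_i$ of $P$ by a path $P_i$ of length $l_i\le n$ in $\Sigma$, declares the concatenation $P'=\cup_i P_i$ to be a $u$--$v$ path of length $k'=\sum_i l_i\le kn$ with $\sigma(P')=\prod_i\sigma(P_i)=\prod_i\sigma'(e_i)=\sigma'(P)$, and disposes of the lower bound in one sentence by citing Lemma~\ref{L1}. So the first two paragraphs of your proposal are a faithful (indeed more careful) rendering of the paper's argument, and the two difficulties you isolate are genuine points that the paper passes over in silence rather than resolves.

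On the first difficulty: you are right that the lower bound $(k-1)n+1\le k'$ cannot hold for an arbitrary path $P$ in $\Sigma^n$ --- if every edge of $P$ is already an edge of $G$ then $k'=k$, which violates the bound whenever $n\ge 2$ and $k\ge 2$. The bound is only meaningful when $k=d_{G^n}(u,v)$, i.e.\ when $P$ is a geodesic, which is how the lemma is used in Lemma~\ref{blcm} but not what the statement says; your reading via $d_{G^n}(x,y)=\lceil d_G(x,y)/n\rceil$ is the correct repair. On the second difficulty: the concatenation of the $P_i$ is in general only a walk, since internal vertices of distinct $P_i$ (or a $P_i$ and a later $w_j$) may coincide, and the paper calls it a path without comment. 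Your proposed fix --- take a minimal-length sign-preserving $u$--$v$ walk and argue that a repeated vertex contradicts minimality --- still leaves open exactly the case you flag, namely that the excised closed sub-walk is negative, and compatibility within distance $n$ does not obviously dispose of it because that closed walk can be long. So your writeup reproduces the paper's proof together with an honest list of what it is missing; neither your version nor the paper's actually closes the walk-versus-path gap, and the statement should in any case be restricted to shortest paths $P$ for the lower bound to be true.
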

\begin{proof}
	Let $\Sigma^n=(G^n,\sigma')$ be the $n^{th}$ power of $\Sigma=(G,\sigma)$ and $P$ be a $uv$ path of length $k$ in $\Sigma^n.$\\
	\textbf{Case 1:} If $k=1$.\\
	Then, $uv$ is an edge in $\Sigma^n.$ By the definition, each edge in $\Sigma^n$ corresponds to a path $P'$ of length $k' \leq n$ in $\Sigma,$ where $\sigma'(uv)=\sigma(P')$.\\
	\textbf{Case 2:} If $k>1.$\\
	Let $\{e_1, e_2, \dots, e_k\}$ be the edge set of $P$ in $\Sigma^n$, where each edge $e_i,$ $1 \leq i \leq k$ corresponds to a path $P_i$ of length $l_i\leq n$ in $\Sigma.$ Then, the concatenation $P'=\cup_i P_i$ will form a $uv$ path of length $k'=\sum_{i=1}^{k}l_i \leq kn$ in $\Sigma$ and the sign of $P'$ is given by $\sigma(P')=\prod_{i=1}^{k}\sigma(P_i)=\prod_{i=1}^{k}\sigma'(e_i)=\sigma'(P).$ \\
	Since, $P$ is of length $k$ by Lemma \ref{L1} $k'$ should be greater than $(k-1)n.$
\end{proof}
\begin{thm}
	Let $\Sigma=(G,\sigma)$ be a signed graph with diameter greater than $n$ and the $n^{th}$ power of $\Sigma$ is unique. Then, $\Sigma^n=(G^n,\sigma')$ is compatible implies $\Sigma$ is compatible.
\end{thm}
\begin{proof}
	Suppose that $\Sigma$ is incompatible. Since, the $n^{th}$ power of $\Sigma$ is unique, by Theorem \ref{T1}, there exist no incompatible pair of vertices at a distance less than or equal to $n.$ Let $u$ and $v$ be an incompatible pair of vertices at a distance $k>n.$ Then, there exist two shortest path $P$ and $Q$ from $u$ to $v$ of length $k$ with $\sigma(P)$ is positive and $\sigma(Q)$ is negative. Then, by Lemma \ref{L1}, there exists two paths $P'$ and $Q'$ from $u$ to $v$ in $\Sigma^{n}$ of length $\lceil\frac{k}{n}\rceil$ with $\sigma(P)=\sigma'(P')$ and $\sigma(Q)=\sigma'(Q').$ Thus, $u$ and $v$ will form an incompatible pair of vertices in $\Sigma^{n},$ a contradiction. Hence, the signed graph $\Sigma$ is compatible. 
\end{proof}

\begin{rmk}
	\rm{The converse of the above theorem is not generally true. For example, consider the cycle $C_7^-$ and its square signed graph given in Figure \ref{Fone}. The cycle $C_7^-$ is compatible, where its square signed graph contains incompatible vertices $u_1$ and $u_4$.}
\end{rmk}
\begin{figure}[h]
	\centering
	\includegraphics[width=12cm]{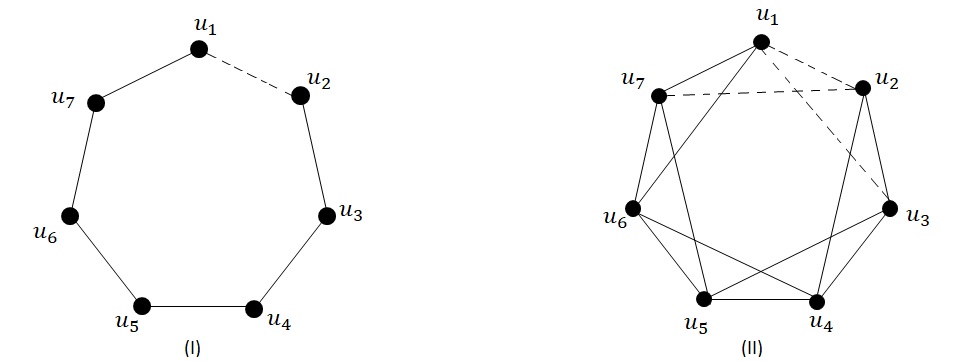}
	\caption{The signed graph $C_7^-$ and its square signed graph.}
	\label{Fone}
\end{figure}

\subsection{Balance criterion in the power of a signed graph}

In this section we give a characterization for balance in the $n^{th}$ power of a signed graph. First we recall some results from ~\cite{sdist, sdpsg}.
\begin{thm} [\cite{sdist}]\label{nbc}
	For a signed graph $\Sigma$ the following statements are equivalent:
	\begin{enumerate}[\rm{(1)}]
		\item [\rm{(1)}] $\Sigma$ is balanced
		\item [\rm{(2)}] The associated signed complete graph $K^{D^{\max}}(\Sigma)$ is balanced.
		\item [\rm{(3)}] The associated signed complete graph $K^{D^{\\min}}(\Sigma)$ is balanced.
		\item [\rm{(4)}] $D^{\max}(\Sigma)=D^{\min}(\Sigma)$ and the associated signed complete graph $K^{D^{\pm}}(\Sigma)$ is balanced.
	\end{enumerate}	
\end{thm}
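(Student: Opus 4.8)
The plan is to run the cycle of implications $(1)\Rightarrow(4)\Rightarrow(2)\Rightarrow(1)$ together with the two extra arrows $(4)\Rightarrow(3)\Rightarrow(1)$, which together force all four statements to be equivalent. Two structural facts underpin everything. First, $\Sigma$ is a signed subgraph of each of $K^{D^{\max}}(\Sigma)$, $K^{D^{\min}}(\Sigma)$ and, when it is defined, $K^{D^{\pm}}(\Sigma)$: the associated complete graph only adds edges between previously non-adjacent vertices and leaves the signs of the existing edges untouched. Second, Harary's criterion: a connected signed graph is balanced if and only if there is a switching function $\theta\colon V\to\{+1,-1\}$ with $\sigma(uv)=\theta(u)\theta(v)$ on every edge.

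For $(1)\Rightarrow(4)$ I would first observe that in a balanced signed graph the sign of any walk from $u$ to $v$ equals $\theta(u)\theta(v)$, by telescoping $\prod_i\theta(v_{i-1})\theta(v_i)$ along the walk; in particular every shortest $u$--$v$ path has this common sign, so $\sigma_{\max}(u,v)=\sigma_{\min}(u,v)=\theta(u)\theta(v)$ and hence $D^{\max}(\Sigma)=D^{\min}(\Sigma)=D^{\pm}$. It then follows that every edge $uv$ of $K^{D^{\pm}}(\Sigma)$---whether an original edge of $G$ or one added between non-adjacent vertices---has sign $\theta(u)\theta(v)$, so the very same $\theta$ certifies that $K^{D^{\pm}}(\Sigma)$ is balanced, giving (4).

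The arrows $(4)\Rightarrow(2)$ and $(4)\Rightarrow(3)$ are then immediate: under (4) we have $K^{D^{\max}}(\Sigma)=K^{D^{\pm}}(\Sigma)=K^{D^{\min}}(\Sigma)$, already known to be balanced. For $(2)\Rightarrow(1)$, and identically for $(3)\Rightarrow(1)$, I would use the first structural fact above: every cycle of $\Sigma$ is a cycle of $K^{D^{\max}}(\Sigma)$ with the same sign, and a signed subgraph of a balanced signed graph is balanced, so $\Sigma$ is balanced. This closes the loop.

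I expect the only step needing real care to be $(1)\Rightarrow(4)$, and within it the claim that balance forces $D^{\max}=D^{\min}$. If one prefers not to quote Harary's switching function there, the self-contained route is: given two distinct shortest $u$--$v$ paths $P$ and $Q$, the edge set $E(P)\triangle E(Q)$ is an even subgraph, hence an edge-disjoint union of cycles, each positive by hypothesis; therefore $\sigma(P)\sigma(Q)=\sigma\!\left(E(P)\triangle E(Q)\right)=+1$, that is $\sigma(P)=\sigma(Q)$, so $\sigma_{\max}(u,v)=\sigma_{\min}(u,v)$. Everything beyond this is bookkeeping about which edges the three associated complete graphs carry and with what signs.
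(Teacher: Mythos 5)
This theorem is stated in the paper only as a quoted result from \cite{sdist}; the paper gives no proof of it, so there is nothing internal to compare your argument against. On its own terms your proof is correct and complete. The implication cycle $(1)\Rightarrow(4)\Rightarrow(2)\Rightarrow(1)$ together with $(4)\Rightarrow(3)\Rightarrow(1)$ does establish the four-way equivalence; the two structural facts you isolate (that $\Sigma$ sits inside each associated complete graph with its edge signs unchanged, and Harary's switching characterization) are exactly what is needed; and both of your routes to $\sigma_{\max}=\sigma_{\min}$ under balance --- telescoping $\theta(u)\theta(v)$ along a path, or decomposing $E(P)\triangle E(Q)$ into positive cycles --- are sound. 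The only point worth making explicit is that Harary's criterion in the form you use it requires connectivity, which the paper assumes throughout, and that the cycles arising in the symmetric-difference decomposition are genuine cycles of $\Sigma$, so the balance hypothesis applies to them.
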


\begin{thm} [\cite{sdist}] \label{sgs}
	A signed graph $\Sigma$ is balanced if and only if  the associated signed complete graph $K^{D^{\pm}}(\Sigma)$ has the spectrum $\begin{pmatrix}  n-1 & -1\\ 1 & n-1
	\end{pmatrix}$.
\end{thm}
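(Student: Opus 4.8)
The plan is to prove the two implications separately, in each case routing through the associated signed complete graph $K^{D^{\pm}}(\Sigma)$ by means of Theorem \ref{nbc} and then reading the spectral condition off its adjacency matrix. Here $n$ denotes the order of $\Sigma$, equivalently of $K^{D^{\pm}}(\Sigma)$; note also that the very statement presupposes that $K^{D^{\pm}}(\Sigma)$ exists, i.e.\ that $D^{\max}(\Sigma)=D^{\min}(\Sigma)$.

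For the forward direction, suppose $\Sigma$ is balanced. By Theorem \ref{nbc} we have $D^{\max}(\Sigma)=D^{\min}(\Sigma)$ and $K^{D^{\pm}}(\Sigma)$ is balanced, hence (by Harary's balance theorem \cite{hrry}) switching equivalent to the all-positive signed graph on $K_n$. A switching operation replaces the adjacency matrix $A$ by $DAD$ for a $\pm1$-diagonal matrix $D$ with $D^{-1}=D$, so it preserves the spectrum; therefore $A\bigl(K^{D^{\pm}}(\Sigma)\bigr)$ is cospectral with $A(K_n)=J_n-I_n$, whose eigenvalues are $n-1$ (eigenvector $\mathbf{1}$, multiplicity $1$) and $-1$ (multiplicity $n-1$). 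This is precisely the asserted spectrum.

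For the converse, let $A:=A\bigl(K^{D^{\pm}}(\Sigma)\bigr)$ and suppose its only eigenvalues are $n-1$ with multiplicity $1$ and $-1$ with multiplicity $n-1$. Since $A$ has just these two eigenvalues, its minimal polynomial divides $(x-(n-1))(x+1)$, so $A^2=(n-2)A+(n-1)I$. Fix distinct vertices $i$ and $j$ and compare the $(i,j)$-entries of both sides: because $A$ has zero diagonal, $(A^2)_{ij}=\sum_{k\neq i,j}A_{ik}A_{kj}$, a sum of $n-2$ terms each equal to $\pm1$, and it must equal $(n-2)A_{ij}=\pm(n-2)$. A sum of $n-2$ numbers from $\{-1,1\}$ can equal $\pm(n-2)$ only when all the summands agree, and then they must agree with $A_{ij}$; hence $A_{ik}A_{kj}=A_{ij}$, equivalently $A_{ij}A_{jk}A_{ki}=1$, for every vertex $k$. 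Thus every triangle of $K^{D^{\pm}}(\Sigma)$ is positive, and since (for $n\geq3$) the triangles of $K_n$ span its cycle space, the sign of every cycle is a product of triangle signs and hence positive. So $K^{D^{\pm}}(\Sigma)$ is balanced, and Theorem \ref{nbc} gives that $\Sigma$ is balanced.

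I expect the converse to be the main obstacle. All the leverage there comes from the identity $A^2=(n-2)A+(n-1)I$ that the two-point spectrum forces; turning this matrix identity into the combinatorial statement that every triangle is positive is the heart of the argument, after which the step from positive triangles to balance is just the standard fact that triangles generate the cycle space of a complete graph. The forward direction is comparatively routine, being switching-invariance of the spectrum together with Harary's balance theorem.
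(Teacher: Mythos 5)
This statement is quoted from \cite{sdist} and the paper supplies no proof of it, so there is nothing in-text to compare your argument against; judged on its own, your proof is correct. The forward direction (switching invariance of the adjacency spectrum plus Harary's theorem) is the standard one, and your converse via the minimal-polynomial identity $A^2=(n-2)A+(n-1)I$, the entrywise count forcing every triangle to be positive, and the fact that triangles generate the cycle space of $K_n$ is sound; the sign function is indeed a homomorphism on the $\mathrm{GF}(2)$ cycle space since edges in a symmetric difference cancel in pairs. Two small housekeeping points you already half-address: the cases $n\leq 2$ are vacuous for the triangle argument but trivially balanced, and the statement implicitly asserts (via Theorem \ref{nbc}(4)) that $D^{\max}=D^{\min}$, so reading the hypothesis as ``$K^{D^{\pm}}(\Sigma)$ exists and has this adjacency spectrum'' is the right interpretation.
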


\begin{lem} [\cite{sdpsg}] \label{L2}
		Let $u$ and $v$ be incompatible pair of vertices with least distance in a $2$-connected non-geodetic signed graph. Then, there will be two internally disjoint shortest paths from $u$ to $v$ of opposite signs.
\end{lem}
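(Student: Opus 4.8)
The plan is to take an incompatible pair $u,v$ realizing the least distance $k=d(u,v)$ among all incompatible pairs, and to show that a positive and a negative shortest $u$--$v$ path can already be chosen internally disjoint. Since the sign of a path lies in $\{1,-1\}$, incompatibility of $u$ and $v$ means $\sigma_{\min}(u,v)=-1$ while $\sigma_{\max}(u,v)=1$; hence there is a shortest $u$--$v$ path $P$ with $\sigma(P)=1$ and a shortest $u$--$v$ path $Q$ with $\sigma(Q)=-1$, both of length $k$. I would first record the elementary fact that, because $P$ and $Q$ are both shortest paths, any vertex lying on both of them is at the same distance from $u$ along each, so it occupies the same position: writing $P=(u=p_0,p_1,\dots,p_k=v)$ and $Q=(u=q_0,q_1,\dots,q_k=v)$, this says that $P$ and $Q$ are internally disjoint precisely when $p_i\neq q_i$ for every $1\le i\le k-1$.

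Suppose, for contradiction, that $P$ and $Q$ are not internally disjoint. Let $S=\{\,i : 0\le i\le k,\ p_i=q_i\,\}$; then $\{0,k\}\subseteq S$ and, by assumption, $S$ contains some index strictly between $0$ and $k$. Enumerate $S=\{0=a_0<a_1<\cdots<a_m=k\}$ with $m\ge 2$. For each $j$, the sub-paths $P[a_{j-1},a_j]$ and $Q[a_{j-1},a_j]$ run between the same pair of vertices $p_{a_{j-1}}$ and $p_{a_j}$, are themselves shortest paths (a sub-path of a shortest path is shortest) of length $a_j-a_{j-1}\le k-1<k$, and — by the position observation together with the maximality of each gap in $S$ — share no internal vertex. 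The edges of $P$ are partitioned by the $P[a_{j-1},a_j]$, and likewise for $Q$, so $\prod_j\sigma(P[a_{j-1},a_j])=\sigma(P)=1$ and $\prod_j\sigma(Q[a_{j-1},a_j])=\sigma(Q)=-1$; consequently there is an index $j$ with $\sigma(P[a_{j-1},a_j])\neq\sigma(Q[a_{j-1},a_j])$.

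For that $j$, set $x=p_{a_{j-1}}$ and $y=p_{a_j}$; note $x\neq y$. Then $P[a_{j-1},a_j]$ and $Q[a_{j-1},a_j]$ are two shortest $x$--$y$ paths of opposite sign, so $\sigma_{\min}(x,y)=-1\neq 1=\sigma_{\max}(x,y)$; that is, $\{x,y\}$ is an incompatible pair with $d(x,y)=a_j-a_{j-1}<k$, contradicting the minimality of $k$. Hence $P$ and $Q$ are internally disjoint, which is exactly the assertion.

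I expect the only real subtlety to lie in the bookkeeping of the splitting step — arguing cleanly that the selected consecutive segments are genuinely internally disjoint and that the signs multiply correctly over the edge partition of $P$ and of $Q$. The $2$-connectedness and non-geodetic hypotheses are part of the ambient setting of the source; the argument above uses only that $\{u,v\}$ is an incompatible pair of minimum distance (indeed, the mere existence of an incompatible pair already forces the underlying graph to be non-geodetic), so those extra assumptions are not otherwise needed for this particular conclusion.
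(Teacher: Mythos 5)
The paper does not prove this lemma: it is imported verbatim from the reference \cite{sdpsg} (a communicated manuscript), so there is no in-paper argument to compare yours against. Judged on its own, your proof is correct and complete. The three load-bearing steps all check out: (i) a vertex common to two shortest $u$--$v$ paths must occupy the same position on both, because subpaths of shortest paths are shortest, so a discrepancy in position would shorten $d(u,v)$; (ii) splitting $P$ and $Q$ at the common indices partitions their edge sets, so the segment signs multiply to $\sigma(P)=+1$ and $\sigma(Q)=-1$ respectively, forcing some pair of corresponding segments to have opposite signs; (iii) that segment pair is a pair of opposite-sign shortest paths between two vertices at distance strictly less than $k$ (strict because there are at least two segments of positive length), contradicting the minimality of $k$ over incompatible pairs. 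In fact you prove slightly more than is asserted, namely that \emph{every} positive shortest $u$--$v$ path is internally disjoint from \emph{every} negative one. Your closing remark is also accurate: the existence of an incompatible pair already forces the graph to be non-geodetic, and $2$-connectedness plays no role in this argument (it is presumably part of the standing hypotheses of the source paper rather than a genuine requirement here). The only cosmetic caveat is that the "share no internal vertex" observation for the individual segments, while true by the position argument, is not actually needed to reach the contradiction, since incompatibility of the pair $\{x,y\}$ requires only two opposite-sign shortest paths, disjoint or not.
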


\begin{lem}\label{blcm}
	Let $\Sigma=(G,\sigma)$ be a $2$-connected signed graph. Then, $\Sigma$ is balanced implies $\Sigma^n=(G^n,\sigma')$ is compatible.
\end{lem}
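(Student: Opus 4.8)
The plan is to argue by contradiction: assuming $\Sigma$ is balanced but $\Sigma^n$ is incompatible, I will produce two $u$--$v$ paths of opposite signs inside $\Sigma$ itself, which is impossible in a balanced signed graph.

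First I would collect the preliminary facts. Since $\Sigma$ is balanced, Theorem \ref{nbc} (equivalence of (1) and (4)) gives $D^{\max}(\Sigma)=D^{\min}(\Sigma)$, so every pair of vertices of $\Sigma$ is compatible; hence by Theorem \ref{T1} the $n^{th}$ power $\Sigma^n=(G^n,\sigma')$ is well defined (unique). I would also note that powers of a $2$-connected graph are $2$-connected: $G$ is a spanning subgraph of $G^n$, and adjoining edges to a $2$-connected graph preserves $2$-connectedness, so $G^n$ is $2$-connected. Finally I would recall the standard fact that in a balanced signed graph all $u$--$v$ paths carry the same sign; a one-line justification is that the symmetric difference of the edge sets of two $u$--$v$ paths $P_1,P_2$ is an edge-disjoint union of cycles, each positive by balance, so $\sigma(P_1)\sigma(P_2)=+1$.

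Now suppose, for contradiction, that $\Sigma^n$ is incompatible, and let $u,v$ be an incompatible pair at least distance in $\Sigma^n$. Incompatibility of $u,v$ means $d_{\min}(u,v)\neq d_{\max}(u,v)$ in $\Sigma^n$, so among the shortest $u$--$v$ paths in $G^n$ there is one of positive $\sigma'$-sign and one of negative $\sigma'$-sign; in particular $G^n$ is non-geodetic. Applying Lemma \ref{L2} to the $2$-connected, non-geodetic signed graph $\Sigma^n$, I obtain two internally disjoint shortest $u$--$v$ paths $P_1,P_2$ in $\Sigma^n$ with $\sigma'(P_1)=-\sigma'(P_2)$. (The $2$-connectedness is exactly what buys the internal disjointness; the bare existence of two oppositely signed $u$--$v$ paths in $\Sigma^n$ already follows from incompatibility.)

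To finish, I would pull these paths back to $\Sigma$ via Lemma \ref{Le}: each $P_i$ is a $u$--$v$ path in $\Sigma^n$, so there is a $u$--$v$ path $P_i'$ in $\Sigma$ with $\sigma(P_i')=\sigma'(P_i)$. Then $P_1'$ and $P_2'$ are $u$--$v$ paths in $\Sigma$ with $\sigma(P_1')=-\sigma(P_2')$, contradicting the fact that in the balanced signed graph $\Sigma$ all $u$--$v$ paths share a common sign. Hence $\Sigma^n$ is compatible. The only points needing care are checking that powers of $2$-connected graphs stay $2$-connected (so Lemma \ref{L2} is applicable to $\Sigma^n$) and making sure the path-lifting in Lemma \ref{Le} is invoked on genuine $u$--$v$ paths of the two signs, which incompatibility of the pair $u,v$ guarantees; neither presents a real obstacle.
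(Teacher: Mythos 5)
Your proof is correct and follows the same skeleton as the paper's: argue by contradiction, take an incompatible pair $u,v$ in $\Sigma^n$ at least distance, invoke Lemma \ref{L2} to get two internally disjoint shortest $u$--$v$ paths of opposite $\sigma'$-sign, and pull them back to $\Sigma$ via Lemma \ref{Le}. The one genuine divergence is the final contradiction. The paper splits into cases according to whether the pulled-back paths $P'$ and $Q'$ are internally disjoint: if so, their concatenation is a negative cycle; if not, it examines the cycles cut out by their common points and locates a negative one. You instead invoke the single clean fact that in a balanced signed graph the sign of a $u$--$v$ path depends only on $u$ and $v$ (via the symmetric difference $P_1'\,\triangle\, P_2'$ lying in the cycle space and decomposing into edge-disjoint positive cycles), so two oppositely signed $u$--$v$ paths cannot coexist. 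Your route is cleaner and arguably more rigorous, since the paper's case analysis is somewhat informal about how the union of two non-disjoint paths decomposes into cycles, and your argument does not even need the internal disjointness that Lemma \ref{L2} provides. You also supply hypotheses the paper leaves tacit: that $G^n$ is $2$-connected (as $G$ is a spanning $2$-connected subgraph), that $\Sigma^n$ is non-geodetic and the pair is chosen at least distance (both required by Lemma \ref{L2}), and that $\Sigma^n$ is well defined because balance forces compatibility of $\Sigma$. These additions strengthen rather than alter the argument.
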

\begin{proof}
	Let $\Sigma=(G,\sigma)$ be a $2$-connected balanced signed graph. If possible, let $u$ and $v$ be an incompatible pair of vertices in $\Sigma^n.$ Since, $\Sigma^n$ is $2$-connected by using Lemma \ref{L2}, we get two internally disjoint shortest $uv$ paths $P$ and $Q$ of opposite signs. Then, by Lemma \ref{Le}, we can find two $uv$ paths $P'$ and $Q'$ in $\Sigma$ with $\sigma(P')=\sigma'(P)$ and $\sigma(Q')=\sigma'(Q).$ Therefore, $P'$ and $Q'$ can not be the same. If $P'$ and $Q'$ are internally disjoint, then the concatenation $(P')\cup (Q')^{-1}$ will be a negative cycle in $\Sigma,$ a contradiction. Suppose that $P'$ and $Q'$ are not internally disjoint. Consider the cycles $C_1, C_2, \dots, C_m$ formed by the common points of $P'$ and $Q'.$ Since, $P'$ and $Q'$ are not the same, there exists at least one such cycle. Also, since $\sigma(P')\neq \sigma(Q'),$ we can find at least one cycle among $C_1, C_2, \dots, C_m$, say $C_i$ with common points $u_i$ and $v_i$ of $P'$ and $Q',$ where the sign of $u_iv_i$ path along $P'$ and along $Q'$ are distinct. Then, the cycle $C_i$ will be a negative cycle in $\Sigma,$ a contradiction. Hence, $\Sigma^n$ should be compatible.  
\end{proof}

\begin{lem}\label{L3}
Let $\Sigma=(G,\sigma)$ be a signed graph. Then, the associated signed complete graphs $K^{D^{\max}}(\Sigma)=K^{D^{\max}}(\Sigma_{\max}^n)$ and $K^{D^{\min}}(\Sigma)=K^{D^{\min}}(\Sigma_{\min}^n).$ Moreover, if $\Sigma$ is balanced, then $K^{D^{\pm}}(\Sigma)=K^{D^{\pm}}(\Sigma^n).$
\end{lem}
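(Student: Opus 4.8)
The plan is to unwind the definitions of the associated signed complete graph and the $n^{th}$ power signed graph, and to show that the two constructions agree on every pair of vertices by relating shortest paths in $\Sigma$ to shortest paths in $\Sigma^n_{\max}$ (resp. $\Sigma^n_{\min}$). First I would fix the common vertex set $V$, which is shared by $\Sigma$, $\Sigma^n_{\max}$, $K^{D^{\max}}(\Sigma)$ and $K^{D^{\max}}(\Sigma^n_{\max})$. Both $K^{D^{\max}}(\Sigma)$ and $K^{D^{\max}}(\Sigma^n_{\max})$ have underlying graph $K_{|V|}$, so it suffices to prove that they assign the same sign to every edge $uv$. The key observation is that, for a pair $u,v$, the sign assigned in $K^{D^{\max}}(\Sigma)$ is $\sigma_{\max}(u,v)$, computed from shortest $u$--$v$ paths in $\Sigma$; whereas the sign assigned in $K^{D^{\max}}(\Sigma^n_{\max})$ is either the original sign $\sigma'(uv)$ (if $uv$ is already an edge of $\Sigma^n_{\max}$, i.e.\ $d_\Sigma(u,v)\le n$), or the ``$\max$''-sign over shortest $u$--$v$ paths in $G^n$ (if $d_\Sigma(u,v)>n$).

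The core of the argument is a $\sigma_{\max}$-refinement of Lemmas \ref{L1} and \ref{Le}. The case $d_\Sigma(u,v)\le n$ is immediate: $uv$ is an edge of $\Sigma^n_{\max}$ with sign $\sigma_{\max}(u,v)$ by definition of $\Sigma^n_{\max}$, and this is exactly the sign $K^{D^{\max}}(\Sigma)$ assigns, while $K^{D^{\max}}(\Sigma^n_{\max})$ leaves existing edges untouched. For $d_\Sigma(u,v)=k>n$, I would argue as follows. A shortest $u$--$v$ path in $G^n$ has length $\lceil k/n\rceil$ (this is the standard fact that $d_{G^n}(u,v)=\lceil d_G(u,v)/n\rceil$, implicit in Lemmas \ref{L1}--\ref{Le}). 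Given any shortest path in $G^n$, Lemma \ref{Le} produces a $u$--$v$ walk in $\Sigma$ of length between $(\lceil k/n\rceil-1)n+1$ and $\lceil k/n\rceil n$ with the same sign; since the lower end exceeds $k-n$ and $k\le \lceil k/n\rceil n$, one checks this forces the length to be exactly $k$, i.e.\ it is a shortest path in $\Sigma$ with the same sign. Conversely, Lemma \ref{L1} sends a shortest $u$--$v$ path in $\Sigma$ (length $k$) to a $u$--$v$ path in $\Sigma^n$ of length $\lceil k/n\rceil$ — hence shortest in $G^n$ — with the same sign. These two maps show the sets of sign-values realized by shortest $u$--$v$ paths in $\Sigma$ and in $\Sigma^n$ coincide, so their maxima agree: $\sigma_{\max}^{\Sigma^n_{\max}}(u,v)=\sigma_{\max}^{\Sigma}(u,v)$. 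Hence $K^{D^{\max}}(\Sigma)$ and $K^{D^{\max}}(\Sigma^n_{\max})$ assign the same sign to $uv$. The argument for $D^{\min}$ is verbatim the same with $\min$ in place of $\max$ and $\Sigma^n_{\min}$ in place of $\Sigma^n_{\max}$.

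For the ``moreover'' part, if $\Sigma$ is balanced then by Theorem \ref{nbc}(4) we have $D^{\max}(\Sigma)=D^{\min}(\Sigma)$, so $\sigma_{\max}(u,v)=\sigma_{\min}(u,v)$ for all $u,v$; in particular $\Sigma$ is compatible and, by Theorem \ref{T1} applied with $n$ equal to the diameter (or directly), $\Sigma^n_{\max}=\Sigma^n_{\min}=\Sigma^n$. Then $K^{D^{\pm}}(\Sigma)$ is well-defined, and combining the two equalities just proved gives $K^{D^{\pm}}(\Sigma)=K^{D^{\max}}(\Sigma)=K^{D^{\max}}(\Sigma^n)=K^{D^{\pm}}(\Sigma^n)$, where the last equality uses that $\Sigma^n$ is itself compatible — which follows from Lemma \ref{blcm} in the $2$-connected case, or alternatively one notes directly that $D^{\max}(\Sigma^n)=D^{\min}(\Sigma^n)$ because the realized sign-sets over shortest paths are singletons, being in bijection (as above) with the singleton realized sign-sets in $\Sigma$.

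I expect the main obstacle to be the case $d_\Sigma(u,v)>n$: one must be careful that Lemma \ref{Le} only guarantees a $u$--$v$ \emph{walk} in $\Sigma$, and to convert the length bound $(\lceil k/n\rceil-1)n+1\le k'\le \lceil k/n\rceil n$ into the exact value $k'=k$ requires knowing that the $G^n$-path chosen is a \emph{shortest} one, and that a walk of that length whose image under the Lemma \ref{L1} construction returns to a length-$\lceil k/n\rceil$ path cannot be strictly longer than $k$ without contradicting minimality of $d_{G^n}(u,v)$. Pinning down this length bookkeeping — i.e.\ that the two constructions of Lemmas \ref{L1} and \ref{Le} restrict to mutually inverse sign-preserving correspondences between shortest paths — is the delicate point; everything else is definition-chasing.
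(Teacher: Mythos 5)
You correctly isolate the delicate point — converting the bound $(\lceil k/n\rceil-1)n+1\le k'\le \lceil k/n\rceil n$ from Lemma \ref{Le} into $k'=k$ — but the resolution you sketch does not work, and the gap cannot be closed. Both $k=d_G(u,v)$ and $k'$ merely lie in a window of width $n-1$, and a walk with $k'>k$ in that window still collapses to a path of length $\lceil k'/n\rceil=\lceil k/n\rceil$ in $G^n$, so no contradiction with minimality in $G^n$ ever arises: a \emph{shortest} $u$--$v$ path in $G^n$ may expand to a \emph{non-shortest} $u$--$v$ path in $G$. Consequently the set of signs realized by shortest $u$--$v$ paths in $\Sigma^n_{\max}$ can be strictly larger than in $\Sigma$, and the first assertion of the lemma is in fact false. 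The paper's own Figure \ref{Fone} furnishes the counterexample: in $C_7^-$ with the negative edge on the $u_1u_2u_3u_4$ arc and $n=2$, the unique shortest $u_1$--$u_4$ path in $\Sigma$ is negative, so $\sigma_{\max}(u_1,u_4)=-1$; but $u_1u_6u_4$ is a shortest path in $G^2$ (expanding to the length-$4$ walk $u_1u_7u_6u_5u_4$) and is positive, so $\sigma'_{\max}(u_1,u_4)=+1$ in $\Sigma^2_{\max}$ and $K^{D^{\max}}(\Sigma)\neq K^{D^{\max}}(\Sigma^2_{\max})$. (This is exactly why the paper can observe that $u_1,u_4$ become incompatible in the square.) The paper's own proof makes the same unjustified leap, asserting via Lemma \ref{L1} that shortest paths correspond to shortest paths with equal $\sigma_{\max}$, so you should not feel obliged to reproduce it; the statement needs an added hypothesis (e.g.\ $\operatorname{diam}(G)\le n$, where it is trivial) or must be restricted to its ``moreover'' clause.

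The ``moreover'' part is true and is the only part used later (in Theorem \ref{cbp}), but your derivation of it from the first part inherits the flaw. It has a clean direct proof that bypasses shortest-path bookkeeping entirely: by Harary's balance criterion there is $\mu:V\to\{+1,-1\}$ with $\sigma(xy)=\mu(x)\mu(y)$ for every edge $xy$ of $\Sigma$, hence \emph{every} $u$--$v$ path in $\Sigma$, shortest or not, has sign $\mu(u)\mu(v)$. Therefore every edge of $\Sigma^n$ receives sign $\mu(u)\mu(v)$, so $\Sigma^n$ is balanced and compatible (no appeal to $2$-connectedness or Lemma \ref{blcm} is needed), and both $K^{D^{\pm}}(\Sigma)$ and $K^{D^{\pm}}(\Sigma^n)$ are the complete graph on $V$ with signature $\mu(u)\mu(v)$. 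I would recommend recasting your writeup around this argument and flagging the first assertion as requiring repair.
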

\begin{proof}
Let $\Sigma=(G,\sigma)$ be a signed graph and $\Sigma_{\max}^n=(G^n,\sigma')$ be the $n^{th}$ power of $\Sigma.$ Then, by Lemma \ref{L1}, corresponding to the shortest $uv$ path $P$ in $\Sigma$, there is a shortest $uv$ path $P'$ in $\Sigma_{\max}^n$ where,
$\sigma_{\max}(P_{(u,v)})=\sigma'_{\max}(P'_{(u,v)}).$ The associated signed complete graph $K^{D^{\max}}(\Sigma)$ is otained by joining all the non-adjacent vertices in $\Sigma,$ with edges having signs $\sigma(u,v)=\sigma_{\max}(u,v).$ Similarly, the associated signed complete graph $K^{D^{\max}}(\Sigma_{\max}^n)$ is otained by joining all the non-adjacent vertices in $\Sigma_{\max}^n,$ with edges having signs $\sigma'(u,v)=\sigma'_{\max}(u,v)=\sigma_{\max}(u,v).$\\
Also, if $u$ and $v$ are adjacent in $\Sigma,$ we have $\sigma'(u,v)=\sigma_{\max}(u,v).$ Hence, $K^{D^{\max}}(\Sigma)=K^{D^{\max}}(\Sigma_{\max}^n).$ Similarly, we get $K^{D^{\min}}(\Sigma)=K^{D^{\min}}(\Sigma_{\min}^n).$\\
If $\Sigma$ is balanced, then $\Sigma_{\max}^n=\Sigma_{\min}^n=\Sigma^n$ and by Lemma \ref{blcm} $\Sigma^n$ is compatible. Therefore, $K^{D^{\max}}(\Sigma_{\max}^n)=K^{D^{\min}}(\Sigma_{\min}^n)=K^{D^{\pm}}(\Sigma^n).$ Also, $\Sigma$ is balanced implies $\sigma(u,v)=\sigma_{\max}(u,v)=\sigma_{\min}(u,v)$ and $K^{D^{\max}}(\Sigma)=K^{D^{\min}}(\Sigma)=K^{D^{\pm}}(\Sigma).$ Hence, $K^{D^{\pm}}(\Sigma)=K^{D^{\pm}}(\Sigma^n).$
\end{proof}

\begin{thm} \label{cbp}
	A $2$-connected signed graph $\Sigma=(G,\sigma)$ is balanced if and only if $\Sigma^n=(G^n,\sigma')$ is balanced.
\end{thm}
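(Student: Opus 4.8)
The plan is to read off both implications from Lemma~\ref{L3} --- passing to the $n^{th}$ power leaves the associated signed complete graph unchanged --- together with Theorem~\ref{nbc}, which detects balance of a signed graph through balance of that associated signed complete graph. The only preliminary fact needed is that a balanced signed graph is compatible: two shortest $u$--$v$ paths $P_1,P_2$ satisfy $\sigma(P_1)\sigma(P_2)=\sigma\big(E(P_1)\triangle E(P_2)\big)=+1$, since the symmetric difference is an edge-disjoint union of positive cycles, so $\sigma_{\max}(u,v)=\sigma_{\min}(u,v)$; this is also contained in part~(4) of Theorem~\ref{nbc}.

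\emph{Necessity.} Suppose $\Sigma$ is $2$-connected and balanced. By Theorem~\ref{nbc} ($(1)\Rightarrow(4)$), $D^{\max}(\Sigma)=D^{\min}(\Sigma)$ --- so $\Sigma$ is compatible and $\Sigma_{\max}^{n}=\Sigma_{\min}^{n}=\Sigma^{n}$ is well defined --- and $K^{D^{\pm}}(\Sigma)$ is balanced. By Lemma~\ref{blcm}, $\Sigma^{n}$ is compatible, hence $D^{\max}(\Sigma^{n})=D^{\min}(\Sigma^{n})$; and by the ``moreover'' clause of Lemma~\ref{L3}, $K^{D^{\pm}}(\Sigma^{n})=K^{D^{\pm}}(\Sigma)$, which is balanced. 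Thus $\Sigma^{n}$ satisfies condition~(4) of Theorem~\ref{nbc}, so $\Sigma^{n}$ is balanced.

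\emph{Sufficiency.} Suppose $\Sigma^{n}=(G^{n},\sigma')$ is balanced; by definition $(G^{n},\sigma')=\Sigma_{\max}^{n}$. By Theorem~\ref{nbc} ($(1)\Rightarrow(2)$) applied to $\Sigma_{\max}^{n}$, its associated signed complete graph $K^{D^{\max}}(\Sigma_{\max}^{n})$ is balanced. By the first part of Lemma~\ref{L3}, $K^{D^{\max}}(\Sigma_{\max}^{n})=K^{D^{\max}}(\Sigma)$, so $K^{D^{\max}}(\Sigma)$ is balanced, and Theorem~\ref{nbc} ($(2)\Rightarrow(1)$) then yields that $\Sigma$ is balanced. (In particular $\Sigma$ is then compatible, so the power is unique and the notation $\Sigma^{n}$ is justified a posteriori.)

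Once Lemmas~\ref{L1}, \ref{blcm} and~\ref{L3} are available this is essentially a diagram chase, so I do not expect a genuine obstacle. The two points that want a little care are the well-definedness of $\Sigma^{n}$ --- handled by recording that balance implies compatibility --- and the single place $2$-connectedness is really used, namely Lemma~\ref{blcm}, and through it the ``moreover'' part of Lemma~\ref{L3}, in the necessity half; the sufficiency half needs no connectivity hypothesis. One could instead prove sufficiency by hand, contracting a negative cycle of $G$ to a short cycle of $G^{n}$ via Lemma~\ref{L1}, but verifying that the two contracted arcs remain internally disjoint and produce a genuine negative cycle is exactly the bookkeeping the Lemma~\ref{L3} route avoids.
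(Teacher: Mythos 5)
Your necessity argument is correct and is essentially the paper's own: Theorem~\ref{nbc} $(1)\Rightarrow(4)$, then Lemma~\ref{blcm} for compatibility of $\Sigma^n$ and the ``moreover'' clause of Lemma~\ref{L3} for $K^{D^{\pm}}(\Sigma^n)=K^{D^{\pm}}(\Sigma)$, then Theorem~\ref{nbc} $(4)\Rightarrow(1)$ again. Where you diverge is the converse: the paper disposes of it in one line by observing that $\Sigma$ is a (sign-preserving) subgraph of $\Sigma^n$ --- every edge $uv$ of $G$ has $d(u,v)=1$, so $\sigma'(uv)=\sigma(uv)$, and hence every cycle of $\Sigma$ is a cycle of $\Sigma^n$ with the same sign --- whereas you route the converse through Theorem~\ref{nbc} $(1)\Rightarrow(2)$ applied to $\Sigma_{\max}^{n}$ and the first identity of Lemma~\ref{L3}. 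Your route is valid and has the small virtue of making sense even before the power is known to be unique (you only need $\Sigma_{\max}^{n}$ balanced), but it leans on Lemma~\ref{L3} where the subgraph observation needs nothing at all; both are fine, and your handling of the well-definedness of $\Sigma^{n}$ in the necessity half is a point the paper leaves implicit.
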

\begin{proof}
	Suppose that $\Sigma$ is balanced. Then, by Theorem \ref{nbc} we get $D^{\max}(\Sigma)=D^{\min}(\Sigma)$ and the associated signed complete graph $K^{D^{\pm}}(\Sigma)$ is balanced.
	Since, $\Sigma$ is balanced by using Lemma \ref{blcm} and Lemma \ref{L3} we get $\Sigma^n$ is compatible and $K^{D^{\pm}}(\Sigma)=K^{D^{\pm}}(\Sigma^n).$ Which implies, $D^{\max}(\Sigma^n)=D^{\min}(\Sigma^n)$ and $K^{D^{\pm}}(\Sigma^n)$ is balanced. Again, by using Theorem \ref{nbc}, we get $\Sigma^n$ is balanced. 
	
	Conversely, suppose that $\Sigma^n$ is balanced. Being a subgraph of $\Sigma^n,$ $\Sigma$ should be balanced.
\end{proof}

The following Corollary is an immediate consequence of Theorem \ref{sgs} and Theorem \ref{cbp}.
\begin{cor}
	Let $\Sigma=(G,\sigma)$ be a $2$-connected compatible signed graph of order $m$. Then, the $n^{th}$ power signed graph $\Sigma^n$ is balanced if and only if the associated signed complete graph $K^{D^{\pm}}(\Sigma)$ has the spectrum $\begin{pmatrix}  m-1 & -1\\ 1 & m-1
	\end{pmatrix}$.
\end{cor}

\section*{Acknowledgement}

The first author would like to acknowledge his gratitude to the Council of Scientific and Industrial Research (CSIR), India, for the financial support under the CSIR Junior Research Fellowship scheme, vide order nos.: 09/1108(0032)/2018-EMR-I.


\section*{References}
\begin{enumerate}	
	
\bibitem{spow} J. Akiyama, H. Era and G. Exoo, Further results on graph equations for line graphs and $n^{th}$ power graphs, Discrete Mathematics, 34 (1981), 209--218.	

\bibitem{hrry} F. Harary, A characterization of balanced signed graphs,  Mich.\ Math.\ J.,\ 2 (1953), 143--146.

\bibitem{sdist} Shahul Hameed K, Shijin T V, Soorya P, Germina K A and Thomas Zaslavsky, Signed Distance in Signed Graphs, Linear Alg. and its Applications, (Accepted).

\bibitem{sdpsg} Shijin T V, Soorya P, Shahul Hameed K, Germina K A, On Signed Distance in Product of Signed Graphs, (communicated). 

\bibitem{tz1} T.\ Zaslavsky, Signed graphs,  Discrete Appl.\ Math.\ 4 (1982) 47--74.  Erratum,  Discrete Appl.\ Math.\, 5 (1983), 248.

\end{enumerate}

\end{document}